\newcommand{\Z}{\mathbb{Z}}
\newcommand{\Pj}{\mathbb{P}}
\newcommand{\E}{\mathcal{E}}
\newcommand{\loc}{\mathcal{L}}
\newcommand{\En}{\mathcal{E}_{norm}}
\DeclareMathOperator{\cod}{codim}
\DeclareMathOperator{\cok}{coker}
\DeclareMathOperator{\exc}{expcodim}
\DeclareMathOperator{\socle}{Soc}
\DeclarePairedDelimiter\floor{\lfloor}{\rfloor}
\theoremstyle{theorem}
\newtheorem{teo}{Theorem}
\newtheorem{theoremnonumber}{Theorem}
\newtheorem{propositionnonumber}{Proposition}
\newtheorem{prop}[teo]{Proposition}
\theoremstyle{definition}
\newtheorem{df}[teo]{Definition}
\theoremstyle{remark}
\newtheorem{rmk}[teo]{Remark}
\title{The non-Lefschetz locus of vector bundles of rank $2$ over $\Pj^2$}
\author{Emanuela Marangone}
\email{emarango@nd.edu}
\address{Department of Mathematics \\
	University of Notre Dame \\
	Notre Dame, IN 46556 USA}
\subjclass[2020]{13E10, 13F20, 13D02 (primary); \\ 13H10, 13C40, 13A02, 14F06 (secondary)}
\keywords{Weak Lefschetz property, rank 2 vector bundle, jumping lines, non-Leschetz locus, artinian modules}
\thanks{I would like to express my gratitude and thanks to my advisor, professor Juan Migliore for his guidance and help throughout this project.}
\begin{document}
	\maketitle
	\begin{abstract}
		A finite length graded $R$-module $M$ has the Weak Lefschetz Property if there is a linear element $\ell$ in $R$ such that the multiplication map $\times\ell: M_t\to M_{t+1}$ has maximal rank for every integer $t$. The set of linear forms with this property form a Zariski-open set and its complement is called the non-Lefschetz locus.
		In this paper we study the non-Lefschetz locus for the first cohomology module $H_*^1(\Pj^2,\E)$ of a vector bundle $\E$ of rank $2$ over $\Pj^2$. The main result is to show that this non-Lefschetz locus has the expected codimension under the assumption that $\E$ is general.
	\end{abstract}
\section{Introduction}
 Let $k$ be an algebraically closed field of characteristic zero and let $R=k[x_1,x_2,x_3]$ be the polynomial ring in $3$ variables over $k$. We say that a finite length graded $R$-module $M$ has the Weak Lefschetz Property (WLP) if there is a linear form $\ell$ in $R$ such that the multiplication map $\times\ell: M_t\to M_{t+1}$ has maximal rank for every $t$. In such case $\ell$ is called a Weak Lefschetz element.
 The set of those linear forms is a Zariski-open set and its complement is called the non-Lefschetz locus of $M$.
 
Determining which Artinian graded algebras, or more generally finite length modules, have the Weak Lefschetz Property is a difficult problem and many authors have studied this topic applying tools from algebraic geometry, commutative algebra and combinatorics.
One of the most famous results in this topic was proved separately by Stanley in \cite{28} using algebraic topology,
Watanabe in \cite{29} using representation theory and Reid, Roberts, Roitman  in \cite{26}, and it states that every Artinian monomial complete intersection over a field of characteristic zero has the WLP.
As a consequence, every general complete intersection over a field of characteristic $0$ has the WLP.
However, it is an open question to determine whether every complete intersection has the Weak Lefschetz property. Harima, Migliore, Nagel, and Watanabe in \cite{13} proved 
the result in codimension $3$  introducing the use of the syzygy bundle, that in this case is a free sheaf of rank $2$ over $\Pj^2$, and applying the Grauert-M\"ulich theorem.
 
There are two main papers that inspired this work. The first  \cite{FFP21} by Failla, Flores and Peterson generalizes the case of complete intersections of codimension $3$, proving that for any vector bundle $\E$ of rank $2$ over $\Pj^2$ the first cohomology module $H_*^1(\Pj^2,\E)=\bigoplus_{t\in\Z}H^1(\Pj^2, \E(t))$ has the WLP.
 
The second \cite{main} by Boij, Migliore, Miró-Roig, and Nagel  conjectures that the non-Lefschetz locus of a general complete intersection has the expected codimension (see Remarks \ref{deg}). They proved this conjecture for general complete intersections of codimension $3$ (Theorem 5.3 of \cite{main}) and codimension $4$ (Theorem 5.6 of \cite{main} and see also \cite{io}).
This is particularly important because if the non-Lefschetz locus achieves the expected codimension, its degree is also known \cite{MIGLIORE}. 

The natural question that arises from these two papers is whether, given a vector bundle $\E$ of rank $2$ over $\Pj^2$, the non-Lefschetz locus of $H_*^1(\Pj^2,\E)$ has the expected codimension. The simplest case is when $\E$ is the syzygy bundle of a complete intersection. However even in this case it was shown in \cite{main} that ``most of the time'' a monomial complete intersection has non-Lefschetz locus that is not of the expected codimension, even though it has the WLP.
In this paper we give a complete answer to the question of whether the non-Lefschetz locus has the expected codimension under the assumption that $\E$ is general. 
 \begin{theoremnonumber}[\bfseries \ref{main}]
	The non-Lefschetz locus of $M=H_*^1(\Pj^2, \E)$ has the expected codimension for any general rank $2$ vector bundle $\E$ on $\Pj^2$.
	In particular it is a hypersurface if either $\E$ unstable or $d$ is even and it is a finite set of points if
	$\E$ is stable and $d$ is odd.
\end{theoremnonumber}

This paper is organized as follows.
In section 2 we define the finite length module $M={H}_*^1(\Pj^2, \E)$ and we recall some of the properties of such modules. We include also the definitions of non-Lefschetz locus of $M$ and expected codimension.

In Section 3 we  making more explicit the connection between splitting type and Weak Lefschetz elements in this particular case, following the work of Brenner and Kaid \cite{3} for Artinian algebras with stable syzygy bundle. 
We give a description of the non-Lefschetz locus for the first cohomology module $M={H}_*^1(\Pj^2, \E)$ of any rank $2$  vector bundle $\E$:
\begin{theoremnonumber}[\bfseries \ref{jum}]
	The non-Lefschetz locus is the set of jumping lines of $\E$.
\end{theoremnonumber}

Section 4 gives an positive answer for our question in an easier special case. Here $M$ is a level module and symmetrically Gorenstein and we already know that the non-Lefschetz locus is concentrated in the middle degree, thanks to Flores \cite{F19}. So, in Proposition \ref{b} we prove that in this case, for $\E$ general, the non-Lefschetz locus has the expected codimension.

In section 5, we generalize the results in section 4 to any general vector bundle $\E$ of rank $2$ over $\Pj^2$. We first extend Corollary 5.12 of \cite{F19}.
\begin{propositionnonumber}[\bfseries \ref{new}]
	The non-Lefschetz locus of $M$ is the same as the non-Lefschetz locus in the middle degree:
	$$\loc_M =\loc_{\lfloor \frac{d-4}{2}\rfloor,M}.$$
\end{propositionnonumber} 
After that, we finally prove that the non-Lefschetz locus for the first cohomology module $M={H}_*^1(\Pj^2, \E)$ of any rank $2$  vector bundle $\E$  has the expected codimension (Theorem \ref{main}).	

\section{Preliminaries}
Let $k$ be an algebraically closed field of characteristic zero and let $R=k[x_1,x_2,x_3]$. Fix integers $n\geq 1$, $a_1\leq\dots\leq a_{n+2}$ and $b_1\leq\dots\leq b_n$.
Consider a degree zero graded map $\varphi:\bigoplus_{i=1}^{n+2}R(-a_i)\to\bigoplus_{i=1}^{n} R(-b_i)$ such that $M=\cok(\varphi)$ has finite length.
We have the exact sequence
$$0\to E \to \bigoplus_{i=1}^{n+2}R(-a_i)\to\bigoplus_{i=1}^{n} R(-b_i) \to M \to 0.$$
Since $M$ has finite length, the Buchsbaum-Riemann complex gives us a minimal free resolution of $M$
$$0\to \bigoplus_{i=1}^{n} R(-d+b_i) \to\bigoplus_{i=1}^{n+2}R(-d+a_i)\to \bigoplus_{i=1}^{n+2}R(-a_i)\to\bigoplus_{i=1}^{n} R(-b_i) \to M \to 0$$
where $d=a_1+\dots+a_{n+2}-b_1-\dots -b_n$. 
Sheafifying and decomposing, we get short exact sequences of sheaves
\begin{equation}\label{1}
	0\to \mathcal{E} \to \bigoplus_{i=1}^{n+2}\mathcal{O}_{\mathbb{P}^2}(-a_i)\to\bigoplus_{i=1}^{n} \mathcal{O}_{\mathbb{P}^2}(-b_i) \to 0,
\end{equation}
\begin{equation}\label{2}
	0\to \bigoplus_{i=1}^{n}\mathcal{O}_{\mathbb{P}^2}(-d+b_i)\to \bigoplus_{i=1}^{n+2} \mathcal{O}_{\mathbb{P}^2}(-d+a_i) \to\mathcal{E} \to 0.
\end{equation}
$\mathcal{E}$ is a rank 2 vector bundle on $\Pj^2$ and it satisfies $H^1(\Pj^2, \E(t))=M_t$ so $${H}_*^1(\Pj^2, \E):=\bigoplus_{t\in\Z}H^1(\Pj^2, \E(t))=M.$$
\begin{rmk}
	In general every finite length module $M$ over $R$ that can be expressed as a cokernel of a map $\varphi:\bigoplus_{i=1}^{n+2}R(-a_n)\to\bigoplus_{i=1}^{n} R(-b_i)$ corresponds to a finite length module of the form ${H}_*^1(\Pj^2, \E)$ for a rank $2$ vector bundle  $\E$ on $\Pj^2$, and vice versa. 
\end{rmk}
\begin{df} \label{WLP}
	We say that a finite length graded $R$-module $M$ has the \emph{Weak Lefschetz Property} if there is a linear element $\ell$ in $R$ such that the multiplication map $\times\ell: M_t\to M_{t+1}$ has maximal rank, i.e. is either injective or surjective, for every $t$. In such case $\ell$ is called a \emph{Weak Lefschetz element} or just \emph{Lefschetz element}.
\end{df}
In \cite{FFP21}, the authors proved that the finite length modules  $M=H_*^1(\Pj^2, \E)$ have the Weak Lefschetz Property.

Moreover from \cite{FFP21,F19}:
\begin{itemize}
	\item the Hilbert function of $M$ is unimodal;
	\item the socle degree of $M$ is $e=d-3-b_1$;
	\item the Hilbert function of $M$ is symmetric with respect to the middle degree $\lfloor\frac{e+b_1}{2}\rfloor=\lfloor\frac{d-3}{2}\rfloor$.	
\end{itemize}

Finally, we recall the definition of the non-Lefschetz locus for finite modules over $R$ following \cite{main,MIGLIORE,F19} (here we work in $R=k[x_1,x_2,x_3]$ so we are giving the definitions in only $3$ variables).
\begin{df}
	The \emph{non-Lefschetz locus} of $M$ as subset of $(\Pj^{2})^*$ is
	$$\mathcal{L}_M=\{[\ell]\in\Pj( R_1) : \ell \text{ is not a Lefschetz element of } M \}.$$
\end{df}
\begin{df}
	For any integer $t$ we can also define the \emph{non-Lefschetz locus in degree $t$} 
	$$\mathcal{L}_{M,t}=\{[\ell]\in\Pj(R_1) :\times\ell: M_t\to M_{t+1} \text{ does not have maximum rank} \}.$$
\end{df}

Let $\ell=a_1x_1+a_2x_2+a_3x_3\in R_1$ be a linear form. We introduce $S=k[a_1,a_2,a_3]$ as the homogeneous coordinate ring of the dual projective space $(\Pj^{2})^*$ in the  dual variables $a_1,a_2,a_3$.
Given a choice of basis for $M_t$ and $M_{t+1}$ the map 
\begin{align*}
	 R_1 &\to \hom_k(M_t,M_{t+1})\\
	\ell&\mapsto (\times\ell : M_t \to M_{t+1})
\end{align*}
is represented by a $h_{t+1}\times h_{t}$ matrix $B_t$ of linear forms in $S$, where $h$ is the Hilbert function of $M$.
The locus $\mathcal{L}_{M,t}\subseteq (\Pj^{2})^*$ is scheme-theoretically defined by the ideal $I(\mathcal{L}_{M,t})$ of maximal minors of the matrix $B_t$.
The non-Lefschetz locus $\mathcal{L}_M$ is defined as a sub-scheme of $(\Pj^{2})^*$ by the homogeneous ideal $I(\mathcal{L}_M)=\bigcap_{i\geq0}I(\mathcal{L}_{M,t})$.
\begin{rmk} \label{deg}
		\begin{itemize}
		\item Assume $h_t=\dim M_t\leq \dim M_{t+1}=h_{t+1}$. The expected codimension of $\mathcal{L}_{M,t}$ is $h_{t+1}-h_t+1$, and if it is achieved, then $\deg \mathcal{L}_{M,t}=\binom{h_{t+1}}{h_t-1}$ \cite{main}.
		\item If we consider a Gorenstein algebra $A$ of socle degree $e$, $\mathcal{L}_A=\mathcal{L}_{A,\lfloor\frac{e-1}{2}\rfloor}$ \cite{main}. So under this hypothesis, we can consider the expected codimension and expected degree of the non-Lefschetz locus:
		$$\cod \mathcal{L}_A=h_{\lfloor\frac{e+1}{2}\rfloor}-h_{\lfloor\frac{e-1}{2}\rfloor}+1; \ \ \ \deg \mathcal{L}_A=\binom{h_{\lfloor\frac{e+1}{2}\rfloor}}{h_{\lfloor\frac{e-1}{2}\rfloor}-1}.$$
		We will see later that something similar happens in our case for $M$.
	\end{itemize}
	
\end{rmk}
Here in fact we want to study the codimension of the non-Lefschetz locus of $M$.

\section{Jumping lines and non-Lefschetz locus}
In this section describe the elements of the non-Lefschetz locus of $M$. In particular we want to extend to our setting the connection between splitting type and Lefschetz elements first made by Brenner and Kaid \cite{3} for Artinian algebras with stable syzygy bundle. 

We first observe that a linear form  $\ell$ is a Weak Lefschetz element for $M={H}_*^1(\Pj^2, \E)$ (Definition  \ref{WLP}) if and only if is a Weak Lefschetz element for ${H}_*^1(\Pj^2, \E(m))$, for any integer $m$. Then for this section we will assume, when it is convenient, that $\E$ is normalized  and so the first Chern Class $c_1(\E)\in\{0,-1\}$ (\cite{OSS}).
The Grauert-M\"{u}lich theorem (\cite{OSS}) together with \cite[Proposition 3.5]{FFP21} give us the following:
\begin{teo} \label{GM}
	Let $\E$ be a normalized rank $2$ vector bundle on $\Pj^2$ and let $\ell$ be a general line. Then
	\begin{itemize}
		\item if $\E$ is semistable and $c_1(\E)=0$, then $\E_{|\ell}\cong\mathcal{O}_{\mathbb{P}^1}\oplus\mathcal{O}_{\mathbb{P}^1}$;
		\item if $\E$ is semistable and $c_1(\E)=-1$, then $\E_{|\ell}\cong\mathcal{O}_{\mathbb{P}^1}(-1)\oplus\mathcal{O}_{\mathbb{P}^1}$;
		\item if $\E$ is unstable, with instability index $k$ and $c_1(\E)=0$, then $k>0$ and $\E_{|\ell}\cong\mathcal{O}_{\mathbb{P}^1}(-k)\oplus\mathcal{O}_{\mathbb{P}^1}(k)$;
		\item if $\E$ is unstable, with instability index $k$ and $c_1(\E)=-1$, then $k\geq0$ and $\E_{|\ell}\cong\mathcal{O}_{\mathbb{P}^1}(-k-1)\oplus\mathcal{O}_{\mathbb{P}^1}(k)$.
	\end{itemize} 
\end{teo} 
\begin{df}
	Given a line $\ell$ the \emph{splitting type} of $\E$ on $\ell$ is the couple $(a,b)$ where $\E_{|\ell}\cong\mathcal{O}_{\mathbb{P}^1}(a)\oplus\mathcal{O}_{\mathbb{P}^2}(b)$.
\end{df}
\begin{rmk}
	For $\E$ semistable, not necessarily normalized, Theorem \ref{GM} is equivalent to say that a general line $\ell$, has splitting type $(a,b)$  with $|a-b|\leq 1$.
\end{rmk}
\begin{df}
	A \emph{jumping line} is a linear element $\ell$ with spitting type different by the splitting type of as a general line.
\end{df}
In \cite[Theorem 3.7]{FFP21}, the authors  proved that a general line is a Weak Lefschetz element for $M$ using the splitting type of those lines. Our main result in this section is prove that such a condition is also necessary, so we have the following theorem:
\begin{teo}\label{jum}
The non-Lefschetz locus is the set of the jumping lines.
\end{teo}
\begin{proof}
	We want to prove that given a line $\ell$ with spitting type $(a,b)$, $\ell$ is a weak Lefschetz element if and only if $(a,b)$ is the splitting type on the general line. 
	
	Before proceeding, recall that a linear element $\ell$ is a Weak Lefschetz element if and only 
	$\times \ell: M_{t-1}\to M_{t}$ has maximum rank for all $t$, if and only if
	$\mu_\ell: {H}^1(\Pj^2, \E(t-1))\to{H}^1(\Pj^2, \E(t))$ is injective or surjective.
	Consider the short exact sequence
	$$0\to\E(t-1)\to\E(t)\to\E(t)_{|\ell}\to0.$$
	Applying the global section functor we get the long exact sequence
	\begin{align*}
		0\to H^0(\Pj^2,\E(t-1))\to H^0(\Pj^2,\E(t))\to H^0(\Pj^2,\E(t)_{|\ell})\to H^1(\Pj^2,\E(t-1))\stackrel{\mu_\ell}{\to} H^1(\Pj^2,\E(t))\to \\
		\to H^1(\Pj^2,\E(t)_{|\ell})\to H^2(\Pj^2,\E(t-1))\to H^2(\Pj^2,\E(t))\to H^2(\Pj^2,\E(t)_{|\ell})=0.
	\end{align*}
	Then we have the following facts, which we will strongly use in our arguments.
	\begin{enumerate}
		\item \label{A1} $\mu_\ell$ is injective if $h^0(\Pj^2,\E(t)_{|\ell})=0$;
		\item \label{A2} $\mu_\ell$ is injective if and only if $h^0(\Pj^2,\E(t)_{|\ell})- h^0(\Pj^2,\E(t))+h^0(\Pj^2,\E(t-1))=0$;
		\item \label{A3} $\mu_\ell$ is injective if and only if the map $  H^0(\Pj^2,\E(t))\to H^0(\Pj^2,\E(t)_{|\ell})$ is surjective;
		\item \label{B1} $\mu_\ell$ is surjective if $h^1(\Pj^2,\E(t)_{|\ell})=0$;
			\item \label{B2} $\mu_\ell$ is surjective if and only if $h^1(\Pj^2,\E(t)_{|\ell})- h^2(\Pj^2,\E(t-1))+h^2(\Pj^2,\E(t))=0$;
		\item \label{B3} $\mu_\ell$ is surjective if and only if the map $  H^1(\Pj^2,\E(t)_{|\ell})\to H^2(\Pj^2,\E(t-1))$ is injective.
	\end{enumerate}
We argue in two different cases depending on whether $\E$ is semistable or unstable.

\emph{Case 1}.	
Assume $\E$ semistable. If $\ell$ is not a jumping line then $\E_{|\ell}\cong\mathcal{O}_{\mathbb{P}^1}(a)\oplus\mathcal{O}_{\mathbb{P}^2}(b)$ with $|a-b|\leq 1$. Shifting the sequence we get
	$$\E(t)_{|\ell}\cong\mathcal{O}_{\mathbb{P}^1}(t+a)\oplus\mathcal{O}_{\mathbb{P}^1}(t+b).$$ Without loss of generality we assume $a\leq b$. Then 
	$h^0(\Pj^2,\E(t)_{|\ell})\cong h^0(\Pj^2,\mathcal{O}_{\mathbb{P}^1}(t+a)\oplus\mathcal{O}_{\mathbb{P}^2}(t+b))=0$ if $t<-b$.
	Using Serre Duality (\cite{Hartshorne}) $$h^1(\Pj^2,\E(t)_{|\ell})=h^0(\Pj^2,\E^\vee(-t-2)_{|\ell})\cong h^0(\Pj^2,\mathcal{O}_{\mathbb{P}^1}(-t-a-2)\oplus\mathcal{O}_{\mathbb{P}^1}(-t-b-2))$$
	and $h^0(\Pj^2,\E(t)_{|\ell})=0$ when $t\geq-a-1$.
	Then by Fact (\ref{A1}) the map $\mu_\ell: {H}^1(\Pj^2, \E(t-1))\to{H}^1(\Pj^2, \E(t))$ is injective when $t<-b$  and, by Fact (\ref{B1})  it is surjective when $t\geq -a-1$. Since $-a-1\leq -b$ ($a\leq b\leq a+1$), we obtain that $\ell$ is a Weak Lefschetz element.
	
	Now assume that $\ell$ is a jumping line, so its splitting type is $(a,b)$ with $b-a>1$ (without loss of generality $a<b$).
	Using the fact that $\E(t)_{|\ell}\cong\mathcal{O}_{\mathbb{P}^1}(t+a)\oplus\mathcal{O}_{\mathbb{P}^1}(t+b)$, we have that $c_1(\E(t))=(t+a)+(t+b)$. In particular
	$c_1(\E(-b))<0$ and $c_1(\E(-a))>0$. Then we can find an integer $m$ such that $-b\leq m \leq -a-2$ with $c_1(\E(m))<0$ and $c_1(\E(m+2))>0$.
	Since $\E$ is semistable, $\E(m)$ is semistable. Now by \cite[Corollary 2.7]{1} we have $h^0(\Pj^2,\E(m)(t))=0$ for  any $t< -c_1(\E(m))/2$. $c_1(\E(m))<0$ so for $t=0$, $h^0(\Pj^2,\E(m))=0$.
	Since $m+b\geq 0$, $$H^0(\Pj^2,\E(m)_{|\ell})\cong H^0(\Pj^2,\mathcal{O}_{\mathbb{P}^1}(m+a)\oplus \mathcal{O}_{\mathbb{P}^1}(m+b))\neq 0.$$ Then the map 
	$  0=H^0(\Pj^2,\E(m))\to H^0(\Pj^2,\E(m)_{|\ell})$ cannot be surjective and so by Fact (\ref{A3})
	$\mu_\ell: {H}^1(\Pj^2, \E(m-1))\to{H}^1(\Pj^2, \E(m))$
	is not injective.
	
	Now by Serre Duality ${H}^2(\Pj^2, \E(m-1))\cong{H}^0(\Pj^2, \E^\vee(-m-2))$. $\E^\vee(-m-2)$ is semistable, because $\E$ is semistable and
	$c_1(\E^\vee(-m-2))=-c_1(\E(m+2))<0$. Using \cite[Corollary 2.7]{1} as above we get that ${H}^2(\Pj^2, \E(m-1))\cong{H}^0(\Pj^2, \E^\vee(-m-2))=0$.
	Now using again Serre Duality
	\begin{align*}
	H^1(\Pj^2,\E(m)_{|\ell})&\cong H^1(\Pj^2,\mathcal{O}_{\mathbb{P}^1}(m+a)\oplus \mathcal{O}_{\mathbb{P}^1}(m+b)) \\ &\cong H^0(\Pj^2,\mathcal{O}_{\mathbb{P}^1}(-m-a-2)\oplus \mathcal{O}_{\mathbb{P}^1}(-m-b-2))\neq 0	
	\end{align*}
 	because $-a-m-2\geq 0$ ($m\leq-a-2$).
	Then the map $ H^1(\Pj^2,\E(m)_{|\ell})\to H^2(\Pj^2,\E(m-1))=0$ is not injective and so $$\mu_\ell: {H}^1(\Pj^2, \E(m-1))\to{H}^1(\Pj^2, \E(m))$$
	is not surjective by Fact (\ref{B3}).
	So we have found an integer $m$ such that $\mu_\ell: {H}^1(\Pj^2, \E(m-1))\to{H}^1(\Pj^2, \E(m))$ does not have maximum rank. Thus $\ell$ is not a Lefschetz element.
	
	\emph{Case 2}.	
	We now assume $\E$ unstable with instability index $k$. Without loss of generality we can assume that $\E$ is normalized and we distinguish 2 sub-cases depending on the value of the first Chern class of $\E$. 
	
	\emph{Case 2.1}.
	We first treat the case $c_1(\E)=0$. Then we know that $k>0$ and $\E^\vee=\E$.
	Let $\ell$ be line such that $\E_{|\ell}\cong\mathcal{O}_{\mathbb{P}^1}(a)\oplus\mathcal{O}_{\mathbb{P}^1}(b)$, and without loss of generality we can assume $a\leq b$.	We want to prove that $\ell$ is a Lefschetz element if and only if $b=k$ and $a=-k$. 
	Since $a+b=c_1(\E)=0$ we have $a=-b$ for $b\geq 0$.

	By \cite[Proposition 3.6]{FFP21} for $t<k$ we have 
	\begin{align*}
		 &h^0(\Pj^2,\E(t)_{|\ell})- h^0(\Pj^2,\E(t))+h^0(\Pj^2,\E(t-1))\\
		&=h^0(\Pj^2,\mathcal{O}_{\mathbb{P}^1}(t-b))+ h^0(\Pj^2,\mathcal{O}_{\mathbb{P}^1}(t+b))-\binom{k+t+2}{2}+\binom{k+t+1}{2}\\
		&=\begin{cases}
			2t+2  & \text{ if }  t\geq b;\\
			b+t+1  & \text{ if } -b\leq t<b;\\
			0  & \text{ if } t<-b.
		\end{cases}+\begin{cases}
			-k-t-1 & \text{ if } -k\leq t<k;\\
			0  & \text{ if } t<-k;
		\end{cases} 
	\end{align*}
 	In a similar way using Serre Duality, the fact that $\E=\E^\vee$, and \cite[Proposition 3.7]{FFP21}, we have for $-t-2< k$ 
 		\begin{align*}
 		&h^1(\Pj^2,\E(t)_{|\ell})- h^2(\Pj^2,\E(t-1))+h^2(\Pj^2,\E(t))\\
 		&=h^1(\Pj^2,\mathcal{O}_{\mathbb{P}^1}(t-b))+ h^1(\Pj^2,\mathcal{O}_{\mathbb{P}^1}(t+b))- h^0(\Pj^2,\E(-t-3))+h^0(\Pj^2,\E(-t-2))\\
 		&=h^0(\Pj^2,\mathcal{O}_{\mathbb{P}^1}(b-t-2))+ h^0(\Pj^2,\mathcal{O}_{\mathbb{P}^1}(-b-t-2))-\binom{k-t-1 }{2}-\binom{k-t}{2}\\
 		&= \begin{cases}
 			  0& \text{ if }  t> b-2;\\
 			b-t-1  & \text{ if } -b-2< t\leq b-2;\\
 			-2t-2  & \text{ if } t\leq-b-2.
 		\end{cases}+\begin{cases}
 			0 & \text{ if } t>k-2;\\
 			 -k+t+1 & \text{ if }-k-2<t\leq k-2 ;
 		\end{cases}\\ 
 	\end{align*}
	There are only 3 possible cases in terms of the values of $a,b$ and $k$.
	\begin{itemize}
		\item $-k<a=-b\leq b<k$;
		\item $-k=a=-b< b=k$ (since $k>0$);
		\item $a=-b<-k<k<b$.
	\end{itemize}
\emph{Case 2.1.1}. 
$-k<a=-b\leq b<k$. We have
\begin{align*}
	h^0(\Pj^2,\E(t)_{|\ell})- h^0(\Pj^2,\E(t))+h^0(\Pj^2,\E(t-1))=\begin{cases}
		-k+t+1 & \text{ if } b\leq t<k;\\
		-k+b  & \text{ if } -b\leq t<b;\\
		-k-t-1 & \text{ if } -k\leq t<-b;\\
		0  & \text{ if } t<-k.
	\end{cases} 
\end{align*}
Then using Fact (\ref{A2}), we can assure that
 $\mu_\ell$ is not injective for $-k\leq t \leq k-2$. Moreover, since $k>0$ the interval $[-k, k-2]$ is not empty.
Similarly 
$$h^1(\Pj^2,\E(t)_{|\ell})- h^2(\Pj^2,\E(t-1))+h^2(\Pj^2,\E(t))= \begin{cases}
	0& \text{ if }  t> k-2;\\
	-k+t+1  & \text{ if } b-2< t\leq k-2;\\
	b-k  & \text{ if } -b-2< t\leq b-2;\\
	-k-t-1 & \text{ if }-k-2<t\leq -b-2 ;
\end{cases}$$
and using Fact (\ref{B2}), we obtain that $\mu_\ell$ is  not surjective for $-k\leq t\leq k-2$.

We have proved that the map $\mu_\ell$ does not have maximum rank for $-k\leq t\leq k-2$. So $\ell$ is not a Weak Lefschetz element.

\emph{Case 2.1.2}. 
$-k=a=-b< b=k$.
Here have that $h^0(\Pj^2,\E(t)_{|\ell})- h^0(\Pj^2,\E(t))+h^0(\Pj^2,\E(t-1))=0$ for $t<k$ as well as $h^1(\Pj^2,\E(t)_{|\ell})- h^2(\Pj^2,\E(t-1))+h^2(\Pj^2,\E(t))=0$ for $t>-k-2$. Then by Facts (\ref{A2})  and 
(\ref{B2}) 
\begin{itemize}
	\item $\mu_\ell$ is injective for $t\leq k-1$;
	\item $\mu_\ell$ is surjective for $t\geq -k-1$
\end{itemize}
so it always has maximum rank, since $k>0$, and in this case $\ell$ is a Weak Lefschetz element.

\emph{Case 2.1.3}. 
$a=-b<-k<k<b$. In this case, we have that
\begin{align*}
	h^0(\Pj^2,\E(t)_{|\ell})- h^0(\Pj^2,\E(t))+h^0(\Pj^2,\E(t-1))=\begin{cases}
		-k+b & \text{ if } -k\leq t<k;\\
		b+t+1  & \text{ if } -b\leq t<-k;\\
		0  & \text{ if } t<-b.
	\end{cases} 
\end{align*}
Then using Fact (\ref{A2}) we conclude that the map $\mu_\ell$ is not injective for $-b\leq t \leq k-1$.
To check when the map $\mu_\ell$ is surjective  we use 
$$h^1(\Pj^2,\E(t)_{|\ell})- h^2(\Pj^2,\E(t-1))+h^2(\Pj^2,\E(t))= \begin{cases}
	0& \text{ if }  t> b-2;\\
	b-t-1  & \text{ if } k-2< t\leq b-2;\\
	b-k  & \text{ if } -k-2< t\leq k-2;
\end{cases}$$
and  Fact (\ref{B2}) assure that $\mu_\ell$ is  not surjective for $-k-1\leq t\leq b-2$.

We have proved that the map $\mu_\ell$ does not have maximum rank for $-k-1\leq t\leq k-1$. Moreover, the interval $[-k-1,k-1]$ is not empty because $k>0$. So $\ell$ is not a Weak Lefschetz element.

Therefore, we can conclude that for  $\E$ unstable, normalized with $c_1(\E)=0$, $\ell$ is a Weak Lefschetz element if and only the splitting type of $\ell$ is $(k,-k)$, i.e. if and only if $\ell$ is not a jumping line.

\emph{Case 2.2}. Now we assume  $\E$ unstable, normalized with $c_1(\E)=-1$ and instability index $k$, and we proceed in an analogous way. In this case we have that $k\geq0$ (instability index) and $\E^\vee=\E(1)$.
Let $\ell$ be a linear element, with splitting type $(a,b)$, without loss of generality we can assume $a\leq b$ as above. Since $a+b=c_1(\E)=-1$ here we have $a=-b-1<0$ for $b\geq 0$.
Recall that we want to prove that $\ell$ is a Lefschetz element if and only if it has the same splitting type of a general line, i.e. $b=k$ and $a=-k-1$. 

By \cite[Proposition 3.7]{FFP21} for $t\leq k$ we have 
\begin{align*}
	&h^0(\Pj^2,\E(t)_{|\ell})- h^0(\Pj^2,\E(t))+h^0(\Pj^2,\E(t-1))\\
	&=\begin{cases}
		-k-t-1 & \text{ if } -k\leq t\leq k;\\
		0  & \text{ if } t<-k;
	\end{cases} + \begin{cases}
		2t+1  & \text{ if }  t> b;\\
		b+t+1  & \text{ if } -b\leq t\leq b;\\
		0  & \text{ if } t<-b.
	\end{cases}\\
\end{align*}
In a similar way using Serre Duality, the fact that $\E^\vee=\E(1)$, and \cite[Proposition 3.7]{FFP21}, for $-t-1\leq k$, i.e. $t>-k-2$, we have
\begin{align*}
	&h^1(\Pj^2,\E(t)_{|\ell})- h^2(\Pj^2,\E(t-1))+h^2(\Pj^2,\E(t))\\
	&=h^1(\Pj^2,\mathcal{O}_{\mathbb{P}^1}(t-b-1))+ h^1(\Pj^2,\mathcal{O}_{\mathbb{P}^1}(t+b))- h^0(\Pj^2,\E^\vee(-t-2))+h^0(\Pj^2,\E^\vee(-t-3))\\
	&=h^0(\Pj^2,\mathcal{O}_{\mathbb{P}^1}(b-t-1))+ h^0(\Pj^2,\mathcal{O}_{\mathbb{P}^1}(-b-t-2))- h^0(\Pj^2,\E(-t-1))+h^0(\Pj^2,\E(-t-2))\\
	&= \begin{cases}
		0& \text{ if }  t\geq b;\\
		b-t  & \text{ if } -b-2< t<b;\\
		-2t-1  & \text{ if } t\leq-b-2.
	\end{cases}+\begin{cases}
		0 & \text{ if } t\geq k;\\
		-k+t & \text{ if }-k-2<t< k .
	\end{cases}\\ 
\end{align*}

We consider separately the three possible sub-cases depending on the values of $a,b$ and $k$. In each sub-case we study when the map $\mu_\ell$ is injective and when it is surjective using Facts (\ref{A2}) and (\ref{B2}), but since the argument is the same one used in \emph{Case 2.1} we omit the computations.

\emph{Case 2.2.1}.
 $-k\leq a=-b-1<0\leq b<k$. 
 Here that the map $\mu_\ell$ does not have maximum rank for $-k\leq t\leq k-1$. Moreover, the interval $[-k,k-1]$ is not empty because $k>b\geq 0$. So $\ell$ is not a Weak Lefschetz element.

\emph{Case 2.2.2}. 
$k=b$ and $a=-k-1$.
In this case $\mu_\ell$  always has maximum rank, so $\ell$ is a Weak Lefschetz element.

\emph{Case 2.2.3}. 
$a=-b-1<-k\leq k <b$. 
Here, the map $\mu_\ell$ does not have maximum rank for $-k-1\leq t\leq k$ and the interval $[-k-1, k]$ is not empty because $k\geq0$. So $\ell$ is not a Weak Lefschetz element.

This proves that for  $\E$ unstable, normalized with $c_1(\E)=-1$, $\ell$ is a Weak Lefschetz element if and only if it is not a jumping line.

This concludes our proof: $\ell$ is in the non-Lefschetz locus if only if it is not a Weak Lefschetz element if and only if $\ell$ is a jumping line. \end{proof}

\section{Expected codimension in the case $b_i=0$ for all $i=1,\dots,n$}
Now we have proved that the elements of the non-Lefschetz locus are exactly the jumping lines so we will use this to compute the codimension of the non-Lefschetz locus.
In this section we will consider the sub-case when all the $b_i$ are zero.
In this case we have $d=a_1+\dots+a_{n+2}$ and we have the exact sequence
$$0\to E \to \bigoplus_{i=1}^{n+2}R(-a_i)\to\bigoplus_{i=1}^{n} R \to M \to 0.$$
By \cite[Lemma 3.1, Proposition 3.9]{F19}, $M$ is a level module and Symmetrically Gorenstein of socle degree $d-3$. Hence  by  \cite[Corollary 5.12]{F19}, the non-Lefschetz locus is the same as the non-Lefschetz locus at the middle degree $\lfloor \frac{e-1}{2}\rfloor=\lfloor \frac{d-4}{2}\rfloor$:
$$\loc_M=\loc_{\lfloor \frac{d-4}{2}\rfloor,M}.$$
Then in this case we can consider expected codimension of the non-Lefschetz locus and the main result of this section will be proving that in the general case such expected codimension is achieved. 

Recall that the expected codimension of the non-Lefschetz locus in degree $it$ is $h_{t+1}-h_t+1$, where $h_t$ is the Hilbert Function, and that if such dimension is achieved then the degree is $\deg \loc_I=\binom{h_{t+1}}{h_t-1}$.
Since in this case $\loc_M=\loc_{\lfloor \frac{d-4}{2}\rfloor,M}$, the expected codimension of the non-Lefschetz locus is 
$$\exc (\loc_M)=h_{\lfloor \frac{d-4}{2}\rfloor+1}-h_{\lfloor \frac{d-4}{2}\rfloor}+1=h_{\lfloor \frac{d}{2}\rfloor-1}-h_{\lfloor \frac{d}{2}\rfloor-2}+1.$$
\begin{prop} \label{b} Let $\E$ be a general rank $2$ vector bundle on $\Pj^2$ such that $M={H}_*^1(\Pj^2, \E)$ is a finite length module that can be expressed as cokernel of a map $\varphi:\bigoplus_{i=1}^{n+2}R(-a_n)\to\bigoplus_{i=1}^{n} R$
Then the non-Lefschetz locus of $M$ has the expected codimension.
\end{prop}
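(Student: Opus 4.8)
The plan is to reduce the statement to a single determinantal condition, show that this condition is open in the family of bundles, and then verify it on one well-chosen member. By Proposition \ref{jum} the non-Lefschetz locus of $M$ is the locus of jumping lines of $\E$, and by Corollary 5.12 of \cite{F19} (recalled above) it is already detected in the single middle degree $i_0=\lfloor\frac{d-4}{2}\rfloor$, so that $\loc_M=\loc_{M,i_0}$ is cut out by the maximal minors of the $h_{i_0+1}\times h_{i_0}$ matrix $B_{i_0}$ of linear forms in $S$. The quantity $\exc(\loc_M)=h_{i_0+1}-h_{i_0}+1$ is exactly the generic, hence maximal possible, codimension of such a determinantal locus, so "attaining the expected codimension'' is equivalent to $\dim\loc_M\le 2-\exc(\loc_M)$ inside $(\Pj^2)^*$.

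Now I pass to the family. The entries of $B_{i_0}$ depend algebraically on $\varphi$ over the nonempty open set $U$ of the parameter space of maps $\bigoplus R(-a_i)\to R^n$ on which $\cok\varphi$ has finite length and the generic (symmetric, unimodal) Hilbert function; there $h_{i_0}$ and $h_{i_0+1}$ are constant and bases for $[M]_{i_0},[M]_{i_0+1}$ may be chosen algebraically. Since $\{\loc_{M}(\varphi)\}_{\varphi\in U}$ is a family of determinantal loci, the dimension of $\loc_M(\varphi)$ is an upper-semicontinuous function of $\varphi$, and its minimum is the expected value. Hence $\{\varphi\in U:\cod\loc_M(\varphi)=\exc(\loc_M)\}$ is Zariski-open, and it is enough to exhibit, for each admissible $a_1\le\dots\le a_{n+2}$, one sheaf $\E_0$ of this type whose non-Lefschetz locus has the expected codimension.

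It is natural to split according to the parity of $d$, equivalently of $c_1(\E)=-\sum a_i$. When $d$ is even the center of symmetry $\frac{d-3}{2}$ is a half-integer, so the two central values coincide, $h_{i_0}=h_{i_0+1}$, and $B_{i_0}$ is square with $\loc_M=V(\det B_{i_0})$. Because $M$ has the WLP by \cite{FFP21}, a general $\ell$ is a Lefschetz element, so $\det B_{i_0}\not\equiv 0$; being a nonconstant form it defines a curve, and $\loc_M$ has codimension $1=\exc(\loc_M)$. Thus the even case requires no genericity and any admissible $\E_0$ already works.

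The essential case is $d$ odd, where $\frac{d-3}{2}$ is an integer, the peak exceeds its neighbor by exactly one (the parabola $-\chi(\E(t))$ has integer vertex with second difference $1$), so $h_{i_0+1}=h_{i_0}+1$ and $\exc(\loc_M)=2$. Here one must show that the $h_{i_0}+1$ maximal minors of $B_{i_0}$ share no common curve component, i.e. cut out a zero-dimensional scheme. By the openness above this reduces to producing a single $\E_0$ with only finitely many jumping lines, and \emph{this is the main obstacle}: for a badly chosen (e.g.\ monomial) $\E_0$ the maximal minors acquire a common factor and the locus collapses to codimension $1$. I would obtain the required example by one of two routes: either choose $\varphi_0$ so that $B_{i_0}$ is manifestly a sufficiently general matrix of linear forms, forcing its maximal minors to have no common component, and confirm zero-dimensionality by a direct (if laborious) minor computation; or invoke the theory of jumping lines of stable bundles with odd determinant, by which a general stable $\E$ with $c_1=-1$ has a finite set of jumping lines, after checking that the family $\{\E_\varphi\}_{\varphi\in U}$ dominates such bundles so that its general member is of this kind. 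In either route a single zero-dimensional example suffices, and semicontinuity then upgrades it to general $\E$, giving $\cod\loc_M=\exc(\loc_M)$; the value $\binom{h_{i_0+1}}{h_{i_0}-1}$ of the degree from Remark \ref{deg} follows at once.
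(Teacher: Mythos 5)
Your reduction of the even-$d$ case is correct and matches the paper, and your semicontinuity framework is reasonable, but the odd-$d$ case contains a genuine error that breaks the argument. You assert that for $d$ odd the peak of the Hilbert function exceeds its neighbour by exactly one, so that $h_{i_0+1}=h_{i_0}+1$ and $\exc(\loc_M)=2$. This computes $h_t$ as $-\chi(\E(t))$, which is only valid when $h^0$ and $h^2$ of the relevant twists vanish, i.e.\ when $\E$ is (semi)stable. For $b_i=0$ and $d$ odd the normalized bundle $\En=\E(\frac{d-1}{2})$ has $c_1=-1$ and is stable precisely when $a_{n+2}<a_1+\dots+a_{n+1}$; when instead $a_{n+2}>a_1+\dots+a_{n+1}$ \emph{every} bundle with that resolution type is unstable, $h^0\bigl(\E(\frac{d-1}{2})\bigr),\,h^0\bigl(\E(\frac{d-3}{2})\bigr),\,h^0\bigl(\E(\frac{d-5}{2})\bigr)$ are given by binomial coefficients in the instability index $k=a_{n+2}-\frac{d+1}{2}\ge 0$, and the correction term $-h^0(\E(\frac{d-1}{2}))+2h^0(\E(\frac{d-3}{2}))-h^0(\E(\frac{d-5}{2}))=-1$ drops the expected codimension to $1$, not $2$ (the Hilbert function acquires a flat top). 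In that range your strategy of exhibiting one member with a zero-dimensional jumping locus cannot succeed: no stable bundle exists in the family, the jumping locus of each member is a curve, and the statement is instead saved because the expected codimension is also $1$ and WLP (from \cite{FFP21}) forces $0<\cod\loc_M\le 1$. So your proof both miscomputes the target and proposes an impossible construction exactly where genericity matters least.

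In the genuinely stable odd-$d$ subcase your route 2 is essentially the paper's argument: it computes $\exc=2$ via Riemann--Roch plus the vanishing $h^0(\E(t))=0$ for $t\le d/2$ (Corollary 2.7 of \cite{1}), and quotes Corollary 10.7.1 of \cite{16} to say a general stable bundle with $c_1(\En)=-1$ has exactly $\binom{c_2(\En)}{2}$ jumping lines, hence a finite non-Lefschetz locus by Proposition \ref{jum}. But you leave both of your routes as sketches: route 1 (a ``laborious minor computation'') is not carried out, and route 2 still requires the dominance of the family $\{\E_\varphi\}$ onto the moduli of stable bundles, which you flag but do not verify. To repair the proposal you should split the odd case by the sign of $a_{n+2}-(a_1+\dots+a_{n+1})$, recompute $\exc(\loc_M)$ in the unstable branch using the actual $h^0$'s, and handle that branch by the WLP squeeze rather than by a jumping-line count.
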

\begin{proof}
	We want to prove that the codimension of the non-Lefschetz locus is $h_{\lfloor \frac{d}{2}\rfloor-1}-h_{\lfloor \frac{d}{2}\rfloor-2}+1$. We proceed separately for the case when $d$ is odd and $d$ is even.

\emph{Case 1}. 
Assume $d$ even. Then the socle degree $e=d-3$ is odd and by the symmetry of the Hilbert function of $M$ we have 
$$\exc (\loc_M)=h_{\frac{d}{2}-1}-h_{\frac{d}{2}-2}+1=h_{ \frac{e+1}{2}}-h_{\frac{e-1}{2}}+1=1.$$
By \cite[Theorem  3.7]{FFP21}, $M$ has the weak Lefschetz property, so $\cod \loc_M> 0$ and we get:
 $$0< \cod \loc_M\leq \exc (\loc_M)=1.$$
Therefore for $d$ even the non-Lefschetz locus always has the expected codimension and so $\loc_M$ is a hypersurface in $\Pj^2$ of degree  $$\deg \loc_I=\binom{h_{\frac{d}{2}-1}}{h_{\frac{d}{2}-2}-1}+\binom{h_{\frac{d}{2}-1}}{h_{\frac{d}{2}-1}-1}=h_{\frac{d}{2}-1}.$$

\emph{Case 2}.
Let us assume now on that $d$ is odd. The socle degree $e=d-3$ is even.
Using the exact sequence (\ref{1})
$$0\to \mathcal{E} \to \bigoplus_{i=1}^{n+2}\mathcal{O}_{\mathbb{P}^2}(-a_n)\to\bigoplus_{i=1}^{n} \mathcal{O}_{\mathbb{P}^2} \to 0$$
we can compute the Chern classes of $\E$ 
$$c_1(\E)=-a_1-\dots -a_{n+2}=-d \ \ \ \ \ \ \ \ c_2(\E)=\sum_{i\neq j}a_ia_j.$$
Similarly using the shifted sequence  
$$0\to \mathcal{E}(t) \to \bigoplus_{i=1}^{n+2}\mathcal{O}_{\mathbb{P}^2}(t-a_n)\to\bigoplus_{i=1}^{n} \mathcal{O}_{\mathbb{P}^2}(t) \to 0$$
we can compute the Chern classes of $\E(t)$ 
$$c_1(\E(t))=-d+2t=c_1(\E)+2t\ \ \ \ \ \ \ \ c_2(\E)=c_2(\E)+c_1(\E)t+t^2$$
and by the Riemann–Roch theorem the Euler characteristic is
\begin{equation}\label{chi}
	\chi(\E(t))=2+\frac{2}{3}c_1(\E)+\frac{1}{2}c_1(\E)^2-c_2(\E)+(3+c_1(\E))t+t^2.
\end{equation}
Since we are assuming $d$ odd the normalized vector bundle is $\En=\E(\frac{d-1}{2})$ and $c_1(\En)=-1.$
Then $\En$ is stable if and only if $\En$ semistable if and only if $H^0(\Pj^2, \En)=0$ by  \cite[Lemma 3.2]{FFP21}.
Since $b_i=0$ the sequence (\ref{2}) becomes
$$0\to \bigoplus_{i=1}^{n}\mathcal{O}_{\mathbb{P}^2}(-d)\to \bigoplus_{i=1}^{n+2} \mathcal{O}_{\mathbb{P}^2}(-d+a_i) \to \E\to 0$$
and shifting by $\frac{d-1}{2}$ we obtain 
$$0\to \bigoplus_{i=1}^{n}\mathcal{O}_{\mathbb{P}^2}\left(-\frac{d+1}{2}\right)\to \bigoplus_{i=1}^{n+2} \mathcal{O}_{\mathbb{P}^2}\left(-\frac{d+1}{2}+a_i\right) \to\En \to 0.$$
Hence, $\En$ has no global section if and only if $-\frac{d+1}{2}+a_{n+2}<0$, i.e. $a_{n+2}<a_1+\dots + a_{n+1}+1$.
Therefore, for $d$ odd we have $\E$ stable  for $a_{n+2}<a_1+\dots+ a_{n+1}+1$. However, since $d$ is odd it is impossible to have $a_{n+2}=a_1+\dots+ a_{n+1}$, so we can rewrite the condition as  $a_{n+2}<a_1+\dots+ a_{n+1}$ and we obtain that $\E$ is unstable  for $a_{n+2}> a_1+\dots +a_{n+1}$.

First let us compute the expected codimension:
\begin{align*}
	\exc \left(\loc_M\right)=& \ h_{\lfloor \frac{d}{2}\rfloor-1}-h_{\lfloor \frac{d}{2}\rfloor-2}+1=h_{\frac{d-3}{2}}-h_{ \frac{d-5}{2}}+1\\
	=& \ h^1\left(\Pj^2,\E\left(\frac{d-3}{2}\right)\right)-h^1\left(\Pj^2,\E\left(\frac{d-5}{2}\right)\right)\\
	=& \ -\chi\left(\E\left(\frac{d-3}{2}\right)\right)+h^0\left(\Pj^2,\E\left(\frac{d-3}{2}\right)\right)+h^2\left(\Pj^2,\E\left(\frac{d-3}{2}\right)\right)\\&+\chi\left(\E\left(\frac{d-5}{2}\right)\right)-h^0\left(\Pj^2,\E\left(\frac{d-5}{2}\right)\right)-h^2\left(\Pj^2,\E\left(\frac{d-5}{2}\right)\right)+1\\
	=& \ \chi\left(\E\left(\frac{d-5}{2}\right)\right)-\chi\left(\E\left(\frac{d-3}{2}\right)\right)+1\\&-h^0\left(\Pj^2,\E\left(\frac{d-1}{2}\right)\right)+2h^0\left(\Pj^2,\E\left(\frac{d-3}{2}\right)\right)-h^0\left(\Pj^2,\E\left(\frac{d-5}{2}\right)\right)\\
\end{align*}
where we have used that $c_1(\En)=-1$ so $\En^\vee=\En(1)$, which implies
\begin{align*}
\E^\vee&=\left(\En\left(-\frac{d-1}{2}\right)\right)^\vee=\En^\vee\left(\frac{d-1}{2}\right)=\En(1)\left(\frac{d-1}{2}\right)\\&=\E\left(\frac{d-1}{2}\right)\left(\frac{d-1}{2}+1\right)=\E(d),
\end{align*}
and by Serre Duality
$$h^2\left(\Pj^2,\E\left(\frac{d-3}{2}\right)\right)=h^0\left(\Pj^2,\check{\E}\left(-\frac{d-3}{2}-3\right)\right)=h^0\left(\Pj^2,\E\left(d-\frac{d+3}{2}\right)\right)=h^0\left(\Pj^2,\E\left(\frac{d-3}{2}\right)\right),$$
$$h^2\left(\Pj^2,\E\left(\frac{d-5}{2}\right)\right)=h^0\left(\Pj^2,\check{\E}\left(-\frac{d-5}{2}-3\right)\right)=h^0\left(\Pj^2,\E\left(d-\frac{d+1}{2}\right)\right)=h^0\left(\Pj^2,\E\left(\frac{d-1}{2}\right)\right).$$
Now using (\ref{chi}) we have
\begin{align*}
&\chi\left(\E\left(\frac{d-5}{2}\right)\right)-\chi\left(\E\left(\frac{d-3}{2}\right)\right)+1\\
&=(3+c_1(\E))\left(\frac{d-5}{2}\right)+\left(\frac{d-5}{2}\right)^2-(3+c_1(\E))\left(\frac{d-3}{2}\right)-\left(\frac{d-3}{2}\right)^2+1\\
&=(3-d)\left(\frac{d-5}{2}-\frac{d-3}{2}\right)+\left(\frac{d-5}{2}\right)^2-\left(\frac{d-3}{2}\right)^2+1=2.
\end{align*}
To compute $-h^0\left(\Pj^2,\E\left(\frac{d-1}{2}\right)\right)+2h^0\left(\Pj^2,\E\left(\frac{d-3}{2}\right)\right)-h^0\left(\Pj^2,\E\left(\frac{d-5}{2}\right)\right)$ we need to consider the cases $\E$ stable and $\E$ unstable separately.

\emph{Case 2.1}
 $\E$ stable. Here we have $a_{n+2}<a_1+\dots +a_{n+1}$, and $h^0(\Pj^2,\E(t))=0$ for any $t\leq -c_1(\E)/2=d/2$ by  \cite[Corollary 2.7]{1}; in particular 
$$h^0\left(\Pj^2,\E\left(\frac{d-1}{2}\right)\right)=h^0\left(\Pj^2,\E\left(\frac{d-3}{2}\right)\right)=h^0\left(\Pj^2,\E\left(\frac{d-5}{2}\right)\right)=0$$
and so $$\exc\left(\loc_M\right)=\chi\left(\E\left(\frac{d-5}{2}\right)\right)-\chi\left(\E\left(\frac{d-3}{2}\right)\right)+1+0=2.$$

We proved in Proposition \ref{jum} that the non-Lefschetz locus is the set of the jumping lines. Since we are assuming that $\E$ is general, $c_1(\En)=-1$, and $\E$ stable, by \cite[Corollary 10.7.1]{16} we can conclude that we have exactly $\binom{c_2(\En)}{2}$ jumping lines. This proves that in the case $d$ odd and $a_{n+2}<a_1+\dots+ a_{n+1}$, the expected codimension is achieved.
 
 \emph{Case 2.2}
 $\E$ unstable. In this case $d$ is odd, $c_1(\En)=-1$ and $a_{n+2}\geq a_1+\dots a_{n+1}$. 

Shifting by $t+\frac{d-1}{2}$ the sequence (\ref{2})
$$0\to \bigoplus_{i=1}^{n+2}\mathcal{O}_{\mathbb{P}^2}\left(t-\frac{d+1}{2}\right)\to \bigoplus_{i=1}^{n} \mathcal{O}_{\mathbb{P}^2}\left(t-\frac{d+1}{2}+a_i\right) \to\En(t) \to 0$$
we obtain $H^0(\Pj^n, \En(t))=0$ if and only if $t\leq a_{n+2}-\frac{d+1}{2}$, so the instability index of $\En$ is $k=a_{n+2}-\frac{d+1}{2}$.
Then using  \cite[Proposition 3.6]{FFP21} 
\begin{align*}
	h^0(\Pj^n, \E(t))=&h^0\left(\Pj^n, \En\left(t-\frac{d-1}{2}\right)\right)=\binom{k+t-\frac{d-1}{2}+2}{2}\\=&\binom{a_{n+2}-\frac{d+1}{2}+t-\frac{d-1}{2}+2}{2}=\binom{a_{n+2}-d+t+2}{2}.
\end{align*}
In particular 
\begin{align*}
	h^0\left(\Pj^2,\E\left(\frac{d-1}{2}\right)\right)&=\binom{a_{n+2}-\frac{d+1}{2}+2}{2}\\
	h^0\left(\Pj^2,\E\left(\frac{d-3}{2}\right)\right)&=\binom{a_{n+2}-\frac{d+3}{2}+2}{2}=\binom{a_{n+2}-\frac{d+1}{2}+1}{2}\\
	h^0\left(\Pj^2,\E\left(\frac{d-5}{2}\right)\right)&=\binom{a_{n+2}-\frac{d+5}{2}+2}{2}=\binom{a_{n+2}-\frac{d+1}{2}}{2}
\end{align*}
and finally we can compute the expected codimension 
\begin{align*}
\exc \left(\loc_M\right)=&\ \chi\left(\E\left(\frac{d-5}{2}\right)\right)-\chi\left(\E\left(\frac{d-3}{2}\right)\right)+1\\&-h^0\left(\Pj^2,\E\left(\frac{d-1}{2}\right)\right)+2h^0\left(\Pj^2,\E\left(\frac{d-3}{2}\right)\right)-h^0\left(\Pj^2,\E\left(\frac{d-5}{2}\right)\right)\\
=&\ 2  -\binom{a_{n+2}-\frac{d+1}{2}+2}{2}+ 2\binom{a_{n+2}-\frac{d+1}{2}+1}{2}- \binom{a_{n+2}-\frac{d+1}{2}}{2}\\
=&\ 2 -\left(a_{n+2}-\frac{d+1}{2}+1\right)+\left(a_{n+2}-\frac{d+1}{2}\right)=1.
\end{align*}
Since $M$ has the weak Lefschetz property by \cite[Theorem  3.7]{FFP21}, $\cod \loc_M> 0$.
Then $$0< \cod \loc_M\leq \exc (\loc_M)=1$$ and so the expected codimension is achieved also in this last case.\end{proof}
\begin{rmk}
	The hypothesis of generality is necessary for the case $d=a_1,\dots +a_{n+2}$ odd and $a_{n+2}\leq a_1,\dots +a_{n+1}-1$. Even in the case of the complete intersection ($n=1$) the result does not hold without that hypothesis.
	For example if we consider the monomial complete intersection $M=R/(x_1^3,x_2^4, x_3^4)$ the expected codimension is $2$, since $d=11$ and $a_3=4< 6=a_1+a_2-1$, but we know from  \cite[Corollary 3.4]{main} that $\cod(\loc_M)=1$.
	In the other cases the hypothesis of generality is unnecessary since 
	$$\exc(\loc_M)=\begin{cases} 1& \text{ if } d \text{ even;}\\
		1& \text{ if } d \text{ odd and }a_{n+2}> a_1,\dots +a_{n+1};\\
	\end{cases}$$
and by  \cite[Theorem  3.7]{FFP21}, $M$  has the weak Lefschetz property,  so $\cod \loc_M\geq 1$.	  
\end{rmk}

\section{Codimension of the non-Lefschetz locus of general vector bundles of rank $2$ over $\Pj^2$}
In this section we want to generalize the results of the previous section for any finite length $M={H}_*^1(\Pj^2, \E)$.
First we will prove that the non-Lefschetz locus is concentrated in the middle degree. This will allow us to consider the expected codimension of the whole non-Lefschetz locus. Finally the main result of this section will be that for a general rank $2$ vector bundle $\E$ the non-Lefschetz locus has the expected codimension.

Recall that we have the exact sequence
$$0\to E \to \bigoplus_{i=1}^{n+2}R(-a_i)\to\bigoplus_{i=1}^{n} R(-b_i) \to M \to 0.$$
In this case $M$ is not a level module, but we can still prove the following:
\begin{prop}\label{new}
	The non-Lefschetz locus of $M$ is the same as the non-Lefschetz locus in the middle degree:
	$$\loc_M =\loc_{\lfloor \frac{d-4}{2}\rfloor,M}$$
\end{prop}
\begin{proof}
	We start with proving that $\loc_M =\loc_{\lfloor \frac{d-4}{2}\rfloor-1,M} \cup \loc_{\lfloor \frac{d-4}{2}\rfloor,M}.$
	
	We first can notice that if a finite length module $M$ has unimodal Hilbert function $\dots\leq h_{m-1}\leq h_m\geq h_{m+1}\geq\dots$, $[\socle (M)]_i=0$ for $i<m$, and it does not have generators in degree $i>m$, then $\loc_M=\loc_{M,m}\cup\loc_{M,m-1}$.
	In fact by \cite[Proposition 2.6]{main} if $h_i\leq h_{i+1}\leq h_{i+2}$ and $[\socle (M)]_i=0$ then $I(\loc_{M,i+1})\subseteq I(\loc_{M,i})$, i.e. $\loc_{M,i}\subseteq\loc_{M,i+1}$. 
	Dualizing this we also have that if $h_i\geq h_{i+1}\geq h_{i+2}$ and there are no generators of degree $i$ then $I(\loc_{M,i+1})\supseteq I(\loc_{M,i})$, i.e. $\loc_{M,i+1}\subseteq\loc_{M,i}$
	Moreover if $h_m=h_{m+1}$, then $\loc_M=\loc_{M,m}$.
	
	In our case by \cite[Lemma 3.1]{F19} $$\socle(M)=\bigoplus_{i=1}^n k(-d+b_i+3)$$
	so there is no socle in the first half. 
	We can prove similarly the dual condition about the generators, but it is enough to notice that $M$ has the Weak Lefschetz property, so both conditions need to be satisfied. 
	Therefore, we have $$\loc_M =\loc_{\lfloor \frac{d-4}{2}\rfloor-1,M} \cup \loc_{\lfloor \frac{d-4}{2}\rfloor,M}.$$
	
	When $d$ is even, $\loc_M =\loc_{\lfloor \frac{d-4}{2}\rfloor,M}$  because in this case the Hilbert function of $M$ has more than one value equal in the middle.
	
	Moreover, $M$ is Symmetrically Gorenstein by \cite[Proposition 3.9]{F19} and so self-dual up to a twist, then for $d$ odd we have $\loc_{\lfloor \frac{d-4}{2}\rfloor-1,M}= \loc_{\lfloor \frac{d-4}{2}\rfloor,M}$.
	So, in any case $\loc_M =\loc_{\lfloor \frac{d-4}{2}\rfloor,M}.$\end{proof}

Since the non-Lefschetz locus is concentrated in the middle degree, the expected codimension of the non-Lefschetz locus is defined. The main result is prove that such codimension is achieved in the general case.
	\begin{teo}\label{main}
		The non-Lefschetz locus of $M=H_*^1(\Pj^2, \E)$ has the expected codimension for any general rank $2$ vector bundle $\E$ on $\Pj^2$.
		In particular 
		$$\cod(\loc_M)=\begin{cases} 1& \text{ if } \E \text{ unstable;}\\
			1& \text{ if } d=a_1+\dots +a_{n+2}-b_1-\dots -b_n \text{ even;}\\
			2& \text{ if } d \text{ odd and $\E$ stable.}\\
		
		\end{cases}$$
	\end{teo}
\begin{proof}
	We want to prove that $\cod \left(\loc_M\right)=h_{\lfloor \frac{d-4}{2}\rfloor+1}-h_{\lfloor \frac{d-4}{2}\rfloor}+1$. Let us first compute the difference $h_{\lfloor \frac{d}{2}\rfloor-1}-h_{\lfloor \frac{d}{2}\rfloor-2}$.
	
	Shifting the sequence (\ref{1}) 
	$$0\to \mathcal{E} \to \bigoplus_{i=1}^{n+2}\mathcal{O}_{\mathbb{P}^2}(-a_i)\to\bigoplus_{i=1}^{n} \mathcal{O}_{\mathbb{P}^2}(-b_i) \to 0$$
	we can compute the first Chern classes
	$$c_1(\E)=-d \ \ \ \ \ c_1(\E(t))=c_1(\E)+2t=-d+2t.$$
	Then, if $d$ is even, $\En=\E(\frac{d}{2})$ and $c_1(\En)=0$; in the case when $d$ is odd, $\En=\E(\frac{d-1}{2})$ and $c_1(\En)=-1$.

Before starting our proof, first notice that for the non normal vector bundle $\E$ the multiplication map 
$$\times\ell : M_{t-1}=H^1\left(\Pj^2,\E\left(t-1\right)\right)\to M_t=H^1\left(\Pj^2,\E\left(t\right)\right)$$
is the map 
$$\mu_\ell : H^1\left(\Pj^2,\En\left(-\floor*{\frac{d}{2}} + t-1\right)\right)\to H^1\left(\Pj^2,\En\left(-\floor*{\frac{d}{2}} + t\right)\right).$$
Then \cite[Theorem 3.7]{FFP21} implies that for a general linear form $\ell$
\begin{itemize}
	\item if $d$ is even and $\E$ is unstable then $\mu_\ell$ is injective for $ t\leq\frac{d}{2}+ k -1$ and it is surjective for $t\geq\frac{d}{2}-k-1$, where $k>0$ is the instability index of $\En$;
	\item if $d$ is even and $\E$ is semistable then $\mu_\ell$ is injective for $ t\leq\frac{d}{2} -1$ and it is surjective for $t\geq\frac{d}{2}-1$;
	\item if $d$ is odd and $\E$ is unstable then  $\mu_\ell$ is injective for $ t\leq\frac{d-1}{2}+ k$ and it is surjective for $t\geq\frac{d-1}{2}-k-1$, where $k\geq0$ is the instability index of $\En$;
	\item if $d$ is odd and $\E$ is stable then  $\mu_\ell$ is injective for $ t\leq\frac{d-1}{2} -1$ and it is surjective for $t\geq\frac{d-1}{2} -1$.
\end{itemize}
This gives us precise information on the Hilbert function of $M$:
\begin{itemize}
	\item if $d$ is even and $\E$  is unstable then $\cdots h_{\frac{d}{2}-k-3}\leq h_{\frac{d}{2}-k-2}=\dots=h_{\frac{d}{2}+k-1}\geq h_{\frac{d}{2}+ k}\cdots$. In particular $h_{\frac{d}{2}-3}=h_{\frac{d}{2}-2}=h_{\frac{d}{2}-1}=h_{\frac{d}{2}}$ since  $k>0$ ;
	\item if $d$ is even and $\E$ is semistable then $\cdots h_{\frac{d}{2}-3}\leq h_{\frac{d}{2}-2}=h_{\frac{d}{2}-1}\geq h_{\frac{d}{2}}\cdots$;
	\item if $d$ is odd and $\E$ is unstable then $\cdots h_{\frac{d-1}{2}-k-3} \leq h_{\frac{d-1}{2}-k-2}=\dots=h_{\frac{d-1}{2}+k}\geq h_{\frac{d-1}{2}+ k +1}\cdots$. In particular $h_{\frac{d-1}{2}-2}=h_{\frac{d-1}{2}-1}=h_{\frac{d-1}{2}}$ since  $k\geq0$ ;
	\item if $d$ is odd and $\E$ is stable then $\cdots h_{\frac{d-1}{2}-2}\leq h_{\frac{d-1}{2}-1}\geq h_{\frac{d-1}{2}}\cdots$.
\end{itemize}
\emph{Case 1}. $\E$ either unstable or $d$ even. 

In the first 3 cases above the Hilbert function has more than one value equal in the middle, hence 
$$\exc(\loc_M)=h_{\lfloor \frac{d-4}{2}\rfloor+1}-h_{\lfloor \frac{d-4}{2}\rfloor}+1=h_{\lfloor \frac{d}{2}\rfloor-1}-h_{\lfloor \frac{d}{2}\rfloor-2}+1=1.$$

Since $M$ has the Weak Lefschetz property by \cite[Theorem 3.7]{FFP21}, we can conclude that if either $\E$ unstable or for $d$ even, then the expected codimension is always achieved:
$$1\leq \cod (\loc_M)\leq \exc \left(\loc_{M}\right)=1.$$
So $\loc_M$ is a hypersurface in $\Pj^2$.

\emph{Case 2}.
Let us now assume $\E$ is a general stable vector bundle and $d$ odd, i.e. $c_1(\En)=-1$.

 In this case using the fact that the non-Lefschetz locus is the set of the jumping lines (Proposition \ref{jum}), by \cite[Corollary 10.7.1]{16} this set is finite, so the non-Lefschetz locus has codimension 2.
Let us compute explicitly the expected codimension: 
\begin{align*}
	\exc \left(\loc_M\right)=&h_{\lfloor \frac{d}{2}\rfloor-1}-h_{\lfloor \frac{d}{2}\rfloor-2}+1=h_{\frac{d-3}{2}}-h_{ \frac{d-5}{2}}+1\\
	=&\ h^1\left(\Pj^2,\E\left(\frac{d-3}{2}\right)\right)-h^1\left(\Pj^2,\E\left(\frac{d-5}{2}\right)\right)\\
	=&\ -\chi\left(\frac{d-3}{2}\right)+h^0\left(\Pj^2,\E\left(\frac{d-3}{2}\right)\right)+h^2\left(\Pj^2,\E\left(\frac{d-3}{2}\right)\right)\\&+\chi\left(\frac{d-5}{2}\right)-h^0\left(\Pj^2,\E\left(\frac{d-5}{2}\right)\right)-h^2\left(\Pj^2,\E\left(\frac{d-5}{2}\right)\right)+1\\
	=&\ \chi\left(\frac{d-5}{2}\right)-\chi\left(\frac{d-3}{2}\right)+1\\&-h^0\left(\Pj^2,\E\left(\frac{d-1}{2}\right)\right)+2h^0\left(\Pj^2,\E\left(\frac{d-3}{2}\right)\right)-h^0\left(\Pj^2,\E\left(\frac{d-5}{2}\right)\right)\\
	=&\ \chi\left(\frac{d-5}{2}\right)-\chi\left(\frac{d-3}{2}\right)+1
\end{align*}
where we have used the facts that $\En^\vee=\En(-1)$, and by Serre Duality, $h^0(\Pj^2,\E(t))=0$ for any $t\leq -c_1(\E)/2=d/2$. In particular $$h^0\left(\Pj^2,\E\left(\frac{d-1}{2}\right)\right)=h^0\left(\Pj^2,\E\left(\frac{d-3}{2}\right)\right)=h^0\left(\Pj^2,\E\left(\frac{d-5}{2}\right)\right)=0$$ as we saw in the proof of Proposition \ref{b}.

To compute the Euler characteristic of $\E(t)$ we shift the  sequence (\ref{1})
$$0\to \mathcal{E}(t) \to \bigoplus_{i=1}^{n+2}\mathcal{O}_{\mathbb{P}^2}(t-a_i)\to\bigoplus_{i=1}^{n} \mathcal{O}_{\mathbb{P}^2}(t-b_i) \to 0,$$
and we use additivity
\begin{align*}
	\chi(\E(t))&=\ \chi\left(\bigoplus_{i=1}^{n+2}\mathcal{O}_{\mathbb{P}^2}(t-a_i)\right) -\chi\left(\bigoplus_{i=1}^{n} \mathcal{O}_{\mathbb{P}^2}(t-b_i)\right)\\
	&=\ \sum_{i=1}^{n+2}\chi\left(\mathcal{O}_{\mathbb{P}^2}(t-a_i)\right) -\sum_{i=1}^{n} \chi\left(\mathcal{O}_{\mathbb{P}^2}(t-b_i)\right)\\
	&=\ \sum_{i=1}^{n+2}\frac{(t-a_i+2)(t-a_i+1)}{2} -\sum_{i=1}^{n} \frac{(t-b_i+2)(t-b_i+1)}{2}\\
	&=\ t^2+3t+2-dt-\frac{3}{2}d +\frac{1}{2}\left(\sum_{i=1}^{n+2}a_i^2-\sum_{i=1}^{n}b_i^2\right).
\end{align*}
Finally, 
\begin{align*}
	\exc \left(\loc_M\right)=
	&\ \chi\left(\frac{d-5}{2}\right)-\chi\left(\frac{d-3}{2}\right)+1\\
	=&\ \left(\frac{d-5}{2}\right)^2+3\frac{d-5}{2}-d\frac{d-5}{2}-\left(\frac{d-3}{2}\right)^2-3\frac{d-5}{2}+d\frac{d-5}{2}+1=2.
\end{align*}
This proves that  $\exc \left(\loc_M\right)=\cod(\loc_M)=2$, as we wanted.\end{proof}
\begin{rmk}
	By Remark \ref{deg}, we can conclude that for a general rank $2$ vector bundle $\E$ on $\Pj^2$ the non-Lefschetz locus of $M=H_*^1(\Pj^2, \E)$ is
	\begin{itemize}
		\item a hypersurface of degree $h_{\lfloor \frac{d-4}{2}\rfloor}$ if either $\E$ unstable or $d$ is even (here the hypothesis of generality is not needed);
		\item a finite set of $\binom{h_{\frac{d-1}{2}}}{h_{ \frac{d-3}{2}}-1}=\binom{c_2(\E(\frac{d-1}{2}))}{2}$ points if
		 $\E$ is stable and $d$ is odd. The last equality comes from the description of the set of the jumping lines in   \cite[Corollary 10.7.1]{16} since in this case $\En=\E(\frac{d-1}{2})$.
	\end{itemize}
\end{rmk}

\bibliographystyle{amsalpha}
\bibliography{non-Lefschetz_locus_vector_bundle_P2}

\end{document}